\newtheorem{theorem}{Theorem}[section]
\newtheorem{lemma}[theorem]{Lemma}
\newtheorem{proposition}[theorem]{Proposition}
\theoremstyle{definition}
\theoremstyle{remark}
\numberwithin{equation}{section}
\begin{document}

\title{Anti-symmetry of the second eigenfunction of the fractional Laplace operator in a 3-D ball}

\author{Rui A. C. Ferreira}
\address{ Grupo F\'isica-Matem\'atica,\\ Faculdade de Ci\^encias, Universidade de\\ Lisboa, Av. Prof. Gama Pinto, 2, 1649-003\\ Lisboa, Portugal.
   }
\email{raferreira@fc.ul.pt}
\thanks{The author was supported by the ``Funda\c{c}\~ao para a Ci\^encia e a Tecnologia (FCT)" through the program ``Investigador FCT" with reference IF/01345/2014.}

\subjclass[2000]{Primary 35P15, 35R11.}



\keywords{Fractional Laplacian, anti-symmetry, unit ball.}

\begin{abstract}
In this work we extend a recent result by Dyda \emph{et. al.} [B. Dyda, A. Kuznetsov, M. Kwa$\acute{\mbox{s}}$nicki, Eigenvalues of the fractional Laplace equation in the unit ball, J. Lond. Math. Soc. (2) {\bf 95} (2017), 500--518.] to dimension 3.
\end{abstract}

\maketitle

\section{Introduction and main result}

Let $d\geq 1$ and $D\subset\mathbb{R}^d$ be the unit ball. For $\alpha\in(0,2]$, define the fractional Laplace operator (see e.g. \cite{Bucur}) by (the case $\alpha=2$ is understood as the limiting case)

$$(-\Delta)^{\frac{\alpha}{2}}f(x)=-\frac{2^{\alpha}\Gamma\left(\frac{d+\alpha}{2}\right)}{\pi^{\frac{\alpha}{2}}\Gamma\left(-\frac{\alpha}{2}\right)}\lim_{\varepsilon\rightarrow 0}\int_{\mathbb{R}^d\backslash B(x,\varepsilon)}\frac{f(x)-f(y)}{|x-y|^{d+\alpha}}dy,$$
and consider the eigenvalue problem for $(-\Delta)^{\frac{\alpha}{2}}$ with a zero condition in the complement of $D$:
\begin{equation}\label{prob}
\left\{\begin{array}{ll}
(-\Delta)^{\frac{\alpha}{2}}\varphi(x)=\lambda\varphi(x),\quad & x\in D,\\
\varphi(x)=0,\quad & x\notin D.
\end{array}\right.
\end{equation}

This problem is being studied by several researchers in different directions but, here, we are interested in proving a result regarding the eigenfunctions of the second smallest eigenvalue of \eqref{prob}. More specifically, we will prove the following:

\begin{theorem}\label{mainresult}
Let $d=3$ and $0<\alpha\leq 2$. Let $\lambda$ be the second smallest eigenvalue of the problem \eqref{prob}. Then, the eigenfunctions corresponding to $\lambda$ are antisymmetric, i.e. they satisfy the relation $\varphi(-x)=-\varphi(x)$. 
\end{theorem}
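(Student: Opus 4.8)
The plan is to pin down the second eigenvalue exactly and show its eigenspace consists only of functions of degree‑one spherical‑harmonic type, which are automatically odd. First I would use the rotational invariance of $(-\Delta)^{\frac{\alpha}{2}}$ together with the exterior zero condition: as in the work of Dyda et al.\ this forces every eigenfunction of \eqref{prob} to be a finite linear combination of separated functions $\varphi_{n,\ell}(x)=\Phi_{n,\ell}(|x|)\,Y_\ell(x/|x|)$, where $Y_\ell$ is a spherical harmonic of degree $\ell\ge0$ on the unit sphere of $\mathbb{R}^3$ and $\Phi_{n,\ell}$ is a radial profile indexed by $n\ge1$, the associated eigenvalue $\lambda_{n,\ell}$ depending only on $(n,\ell)$ (and on $d=3$, $\alpha$). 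The smallest eigenvalue is $\lambda_{1,0}$; it is simple, with positive radial eigenfunction. Hence the second smallest eigenvalue is
\[
\lambda=\min\{\lambda_{n,\ell}:(n,\ell)\neq(1,0)\},
\]
and, since $n\mapsto\lambda_{n,\ell}$ is strictly increasing for each fixed $\ell$ and $\ell\mapsto\lambda_{1,\ell}$ is strictly increasing, this minimum equals $\min(\lambda_{2,0},\lambda_{1,1})$. Thus the whole theorem reduces to the single inequality $\lambda_{1,1}<\lambda_{2,0}$ for $d=3$ and every $\alpha\in(0,2]$: granted this, all the inequalities above become strict, $\lambda=\lambda_{1,1}$ is attained only at $(n,\ell)=(1,1)$, so the $\lambda$‑eigenspace is exactly the span of the $\Phi_{1,1}(|x|)\,x_i/|x|$, $i=1,2,3$, each of which satisfies $\varphi(-x)=-\varphi(x)$. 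The case $\alpha=2$ is classical: $\lambda_{n,\ell}^{(3)}=j_{\ell+1/2,n}^2$, and $j_{3/2,1}^2\approx20.19<4\pi^2=j_{1/2,2}^2\approx39.48$, so only $\alpha\in(0,2)$ needs a genuine argument.

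For the quantitative step I would invoke the explicit two‑sided estimates of Dyda et al.\ for $\lambda_{n,\ell}$. Specialising their upper bound to $(n,\ell,d)=(1,1,3)$ yields a function $U(\alpha)$ with $\lambda_{1,1}^{(3)}\le U(\alpha)$, and specialising their lower bound to $(2,0,3)$ yields $L(\alpha)\le\lambda_{2,0}^{(3)}$; the task becomes the (elementary‑looking) inequality $U(\alpha)<L(\alpha)$ on $(0,2]$. Both $U$ and $L$ are of the shape $\bigl(c(\alpha)\bigr)^{\alpha}$ with $c(\alpha)$ built from ratios of Gamma functions and the relevant ``shifted zeros'', so I expect to reduce $U<L$ to an inequality between the bases $c$, establish that $L(\alpha)/U(\alpha)$ stays bounded below away from $1$ (e.g.\ by a monotonicity argument in $\alpha$), and close the remaining finitely many cases by direct computation. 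The same family of bounds, applied to $(1,\ell,3)$ with $\ell\ge2$, simultaneously gives $\lambda_{1,1}<\lambda_{1,\ell}$ — the lower bound for $\lambda_{1,\ell}$ increases with $\ell$ and already exceeds $U(\alpha)$ at $\ell=2$ — so the monotonicity‑in‑$\ell$ input used above can, if desired, be replaced by this direct estimate.

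The main obstacle is precisely this verification of $U(\alpha)<L(\alpha)$ on all of $(0,2]$. The estimates, though tight, are expressed through Gamma‑function ratios that are not transparently monotone in $\alpha$, and near $\alpha=0$ every eigenvalue of \eqref{prob} tends to $1$, so the comparison has to be carried out at the level of the normalised quantities $\lambda^{1/\alpha}$ (whose $\alpha\to0$ limits are the effective shifted zeros) rather than of $\lambda$ itself. I would therefore split $(0,2]$ into a small neighbourhood of $0$, where refined small‑$\alpha$ asymptotics of the Dyda--Kuznetsov--Kwa\'snicki bounds keep $\lambda_{1,1}^{1/\alpha}$ strictly below $\lambda_{2,0}^{1/\alpha}$, and a compact subinterval bounded away from $0$, where the bounds are continuous and strictly ordered at the endpoints and can be separated by a direct, if somewhat technical, estimate. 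Since the classical gap between $j_{3/2,1}$ and $2\pi$ is comfortably large, there is ample room; the difficulty lies entirely in controlling the special‑function expressions, not in the underlying ordering.
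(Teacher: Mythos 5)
Your strategic reduction coincides with the one this paper (following Dyda, Kuznetsov and Kwa\'snicki) actually uses: separate variables by spherical harmonics, observe that the second eigenvalue is $\min(\lambda_{2,0},\lambda_{1,1})$, and compare an explicit upper bound for $\lambda_{1,1}$ with an explicit lower bound for $\lambda_{2,0}$. In the paper those two comparisons are exactly conditions \eqref{cond1} and \eqref{cond2}, so up to that point you are reconstructing the known reduction rather than offering an alternative. The genuine gap is that your proposal stops precisely where the mathematical content of this note begins: the inequality $U(\alpha)<L(\alpha)$ on all of $(0,2]$ is asserted, together with a plan, but it is not proved, and the plan as described would not go through.

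The reason is that your guiding assumption --- that the comfortable classical gap between $j_{3/2,1}$ and $2\pi$ leaves ``ample room'' --- fails badly for small $\alpha$. At $\alpha=0$ the comparison degenerates completely: the discriminant $b^2(0)-4a(0)\cdot 2$ vanishes, so $g_0$ has a \emph{double} root at $t=20$, and $T(0)=20$ hits it exactly; equivalently, the normalized functions $f$ and $h$ of the paper's final proposition satisfy $f(0)=h(0)=\tfrac29$, and even their first derivatives are nearly equal, the paper's entire margin being $f'(0)=\tfrac{671}{2835}-\tfrac29\ln 2<0.083<h'(0)$. Consequently the ratio of the two normalized bounds tends to $1$ as $\alpha\to 0^+$ and cannot be ``bounded below away from $1$''; endpoint evaluation plus continuity cannot separate two curves that are tangent at an endpoint; and ``finitely many cases by direct computation'' is not available on a continuum without an a priori monotonicity or convexity statement, which is exactly the hard part. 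What is actually needed (and what the paper supplies) is the derivative sandwich $f'(\alpha)\le f'(0)<h'(0)\le h'(\alpha)$ on $[0,\alpha^\star]$, proved via polygamma estimates for $f$ and the convexity of $h$ through the sign analysis of an explicit degree-twelve polynomial. Unless you replace your small-$\alpha$ step with an argument of comparable precision, the proof is incomplete.
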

The previous theorem was known for dimension $d=1$ and with restrictions on the parameter $\alpha$ (cf. \cite{Banuelos,Kwa}). Very recently it was proved in \cite{Dyda} for $\alpha=1$ and $1\leq d\leq 9$, or $0<\alpha\leq 2$ and $d\in\{1,2\}$. The proof is based on estimation of the eigenvalues of \eqref{prob}, in particular, on Theorem 1.2 proved therein. In \cite[Section 4.2]{Dyda} the authors reduce the proof of Theorem \ref{mainresult} (for $0<\alpha\leq 2$ and $d\in\{1,2\}$) into checking the truthfulness of two conditions (we will formally present these conditions in Section \ref{sec3}) and then, in the last paragraph of their text, they indicate that those conditions still hold when $d=3$ (being that based on numerical evidence), though they couldn't prove it. That is the objective of this manuscript.

In Section \ref{sec1} we introduce some notation and results that are to be used throughout. In Section \ref{sec3} we prove\footnote{All the symbolic computations within the proof were done using Maple Software.} Theorem \ref{mainresult} (we note that for dimension $d>3$ this is still an open problem).

\section{Some auxiliary results and notation}
\label{sec1}

In this section we present two results and introduce some notation used throughout this work.

We start with a result appearing in \cite[Theorem 2]{Dragomir}.

\begin{lemma}\label{in0}
Let $m,p,k\in\mathbb{R}$ with $m,p>0$ and $p>k>-m$. If
$$k(p-m-k)\geq (\leq) 0,$$
then
$$\Gamma(p)\Gamma(m)\geq(\leq)\Gamma(p-k)\Gamma(m+k).$$
\end{lemma}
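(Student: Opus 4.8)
The plan is to take logarithms and exploit the log-convexity of the Gamma function together with a symmetry hidden in the four arguments. The crucial structural observation is that the arguments on the two sides pair up with a common sum, $p+m=(p-k)+(m+k)$; this invites us to study the one-parameter family obtained by shifting mass between the two factors. Accordingly, I would set
$$f(t)=\log\Gamma(p-t)+\log\Gamma(m+t),\qquad t\in(-m,p),$$
which is well defined because $p-t>0$ and $m+t>0$ on this interval, and observe that $f(0)=\log[\Gamma(p)\Gamma(m)]$ while $f(k)=\log[\Gamma(p-k)\Gamma(m+k)]$, with both $0$ and $k$ lying in $(-m,p)$ by the hypotheses $m,p>0$ and $p>k>-m$. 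The entire lemma then reduces to determining the sign of $f(0)-f(k)$.

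First I would record two properties of $f$. Differentiating gives $f'(t)=\psi(m+t)-\psi(p-t)$ and $f''(t)=\psi'(m+t)+\psi'(p-t)$, where $\psi=(\log\Gamma)'$ is the digamma function. Since $\psi'(x)=\sum_{n\ge 0}(x+n)^{-2}>0$ for $x>0$, we get $f''>0$, so $f$ is strictly convex; this is exactly the log-convexity of $\Gamma$. Second, writing $t^{*}=(p-m)/2$, both arguments equal $(p+m)/2$ at $t=t^{*}$, and a direct substitution gives $f(t^{*}+u)=\log\Gamma(\tfrac{p+m}{2}-u)+\log\Gamma(\tfrac{p+m}{2}+u)$, which is manifestly even in $u$. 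Hence $f$ is symmetric about $t^{*}$; together with strict convexity (equivalently, $f'>0$ on $(t^{*},p)$ because there $m+t>p-t$ and $\psi$ is increasing), this makes $f(t^{*}+u)$ a strictly increasing function of $|u|$.

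With these two facts the comparison becomes purely algebraic. Since $f$ increases with the distance $|t-t^{*}|$, we have $f(0)\ge f(k)$ if and only if $|t^{*}-0|\ge |t^{*}-k|$, i.e. $(t^{*})^{2}\ge (k-t^{*})^{2}$, which simplifies to $0\ge k^{2}-2kt^{*}=k\big(k-(p-m)\big)=-\,k(p-m-k)$, that is, $k(p-m-k)\ge 0$. Exponentiating $f(0)\ge f(k)$ yields $\Gamma(p)\Gamma(m)\ge\Gamma(p-k)\Gamma(m+k)$, which is the first assertion; the reverse inequalities are the very same chain of equivalences read with all signs reversed, giving the $(\le)$ case at once.

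I expect the only genuine subtlety to be the justification that a function symmetric about $t^{*}$ and strictly convex is monotone in $|t-t^{*}|$, which is dispatched by the one-line remark that $f'>0$ on $(t^{*},p)$; everything else—positivity of the four arguments and convexity via the trigamma series—is standard. As a sanity check I would also inspect the two boundary cases $k=0$ and $k=p-m$, where both sides coincide, confirming that the non-strict inequalities are correctly oriented and sharp.
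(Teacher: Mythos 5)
Your argument is correct and complete: the function $f(t)=\log\Gamma(p-t)+\log\Gamma(m+t)$ is well defined on $(-m,p)$ (which contains both $0$ and $k$ by hypothesis), strictly convex because $f''(t)=\Psi'(p-t)+\Psi'(m+t)>0$, and symmetric about $t^\star=(p-m)/2$; a strictly convex even function of $u=t-t^\star$ is strictly increasing in $\abs{u}$, and the algebraic identity $(t^\star)^2-(k-t^\star)^2=k(p-m-k)$ converts the comparison of distances exactly into the stated sign condition. Note, however, that the paper itself does not prove this lemma at all: it is quoted verbatim from Dragomir, Agarwal and Barnett (reference \cite{Dragomir}, Theorem 2), where it is obtained by an entirely different mechanism, namely the Chebyshev integral inequality for synchronous/asynchronous functions applied to the weight $e^{-t}t^{m-1}$ and the pair $t^{k}$, $t^{p-m-k}$ in the integral representation of $\Gamma$ --- the condition $k(p-m-k)\geq 0$ is precisely the synchronicity of that pair. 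Your route trades that integral inequality for the log-convexity of $\Gamma$ plus a symmetry observation; it is self-contained, arguably more transparent about \emph{why} the condition $k(p-m-k)\geq 0$ appears (it measures which of $0$ and $k$ lies farther from the axis of symmetry), and it makes the equality cases $k=0$ and $k=p-m$ visible at a glance, whereas the Chebyshev route generalizes more readily to other weights and to Beta-function analogues treated in the same reference.
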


The following monotonicity property was proved in \cite[Lemma 4.1]{Dyda}.

\begin{lemma}\label{inc}
The function
$$F(\alpha)=\frac{\Gamma(\alpha+3)\Gamma\left(\frac{\alpha}{2}+\frac{9}{2}\right)}{\Gamma\left(\frac{\alpha}{2}+2\right)\Gamma\left(\alpha+\frac{9}{2}\right)},$$
is increasing on $[0,2]$.
\end{lemma}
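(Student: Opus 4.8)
The plan is to prove that the logarithmic derivative of $F$ is nonnegative on $[0,2]$. Since $F(\alpha)>0$ throughout, monotonicity is equivalent to $(\log F)'(\alpha)\ge 0$. Writing $\psi=\Gamma'/\Gamma$ for the digamma function and differentiating
$$\log F(\alpha)=\log\Gamma(\alpha+3)+\log\Gamma\Big(\tfrac{\alpha}{2}+\tfrac92\Big)-\log\Gamma\Big(\tfrac{\alpha}{2}+2\Big)-\log\Gamma\Big(\alpha+\tfrac92\Big),$$
I obtain $(\log F)'(\alpha)=G(\alpha)$ with
$$G(\alpha)=\psi(\alpha+3)-\psi\Big(\alpha+\tfrac92\Big)+\tfrac12\psi\Big(\tfrac{\alpha}{2}+\tfrac92\Big)-\tfrac12\psi\Big(\tfrac{\alpha}{2}+2\Big).$$
So the whole lemma reduces to showing $G(\alpha)\ge 0$ for $\alpha\in[0,2]$.

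Next I would remove the mismatch between the two scales by setting $\beta=\alpha/2\in[0,1]$ and applying the duplication formula $\psi(2z)=\tfrac12\psi(z)+\tfrac12\psi(z+\tfrac12)+\log 2$ to the two terms of integer type, $\psi(\alpha+3)=\psi\big(2(\beta+\tfrac32)\big)$ and $\psi(\alpha+\tfrac92)=\psi\big(2(\beta+\tfrac94)\big)$. The $\log 2$ contributions cancel, and the half-integer term $\tfrac12\psi(\beta+2)$ produced by the first exactly cancels the term $-\tfrac12\psi(\tfrac{\alpha}{2}+2)=-\tfrac12\psi(\beta+2)$ already present. This leaves the much cleaner identity
$$2G=\psi\Big(\beta+\tfrac32\Big)+\psi\Big(\beta+\tfrac92\Big)-\psi\Big(\beta+\tfrac94\Big)-\psi\Big(\beta+\tfrac{11}4\Big),$$
so it suffices to prove $\psi(\beta+\tfrac32)+\psi(\beta+\tfrac92)\ge\psi(\beta+\tfrac94)+\psi(\beta+\tfrac{11}4)$ for $\beta\in[0,1]$. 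A numerical check at the endpoints is reassuring but warns that the gap is smallest near $\beta=0$, where $2G\approx 0.03$; this tightness means a crude estimate will not suffice.

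To turn this into something verifiable I would use the representation $\psi(y)-\psi(x)=\int_0^1\frac{t^{x-1}-t^{y-1}}{1-t}\,dt$ on the two natural pairings and then substitute $t=v^4$ to clear the quarter-integer exponents, obtaining
$$2G=4\int_0^1\frac{v^{4\beta-1}}{1-v^4}\Big(v^9+v^{11}-v^6-v^{18}\Big)\,dv.$$
The common factor $1-v$ cancels between numerator and denominator, and the kernel becomes the ratio of explicit polynomials $\big(-v^6-v^7-v^8+v^{11}+v^{12}+\cdots+v^{17}\big)/(1+v+v^2+v^3)$, a factorization easily confirmed symbolically. The main obstacle is now visible: this kernel is \emph{not} of one sign, being negative for small $v$ and positive near $v=1$, so the positivity of $2G$ rests on cancellation rather than on a pointwise bound.

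I would overcome this by exploiting the weight $v^{4\beta-1}$: increasing $\beta$ shifts mass toward $v=1$, where the kernel is positive, so the worst case is $\beta=0$. At $\beta=0$ I would prove nonnegativity of the integral by an explicit estimate — for instance, using the recurrence $\psi(y+1)=\psi(y)+1/y$ to rewrite $\psi(\beta+\tfrac92)=\psi(\beta+\tfrac72)+\tfrac{1}{\beta+7/2}$ and then bounding the remaining digamma differences through the monotonicity and concavity of $\psi$. Making the reduction to the single worst point $\beta=0$ rigorous, and then pinning down the tight estimate there, is where the genuine difficulty of the lemma lies.
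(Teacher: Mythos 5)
The paper itself does not prove this lemma: it is quoted verbatim from \cite[Lemma 4.1]{Dyda}, so there is no internal proof to compare against. Judged on its own terms, your outline is sound and every computation I can check is right: the logarithmic derivative is correct, the duplication formula $\psi(2z)=\tfrac12\psi(z)+\tfrac12\psi(z+\tfrac12)+\log 2$ does reduce the claim to $\psi(\beta+\tfrac32)+\psi(\beta+\tfrac92)\ge\psi(\beta+\tfrac94)+\psi(\beta+\tfrac{11}4)$ for $\beta=\alpha/2\in[0,1]$, and the integral kernel (including the factorization after cancelling $1-v$) is exactly as you state. But as written the argument stops at the two places you yourself flag, so it is a plan rather than a proof. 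Both gaps do close, and more easily than you suggest.

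For the reduction to $\beta=0$: the numerator of your kernel is $v^6(1+v+v^2)\bigl(R(v)-1\bigr)$ with $R(v)=\frac{v^5+v^6+\cdots+v^{11}}{1+v+v^2}=v^5+v^8+\frac{v^{11}}{1+v+v^2}$, a strictly increasing function running from $0$ to $\tfrac73$ on $[0,1]$; hence the kernel changes sign exactly once, say at $v_0$, from negative to positive. Writing $v^{4\beta-1}=v^{4\beta}\cdot v^{-1}$ and using $v^{4\beta}\le v_0^{4\beta}$ where the kernel is negative and $v^{4\beta}\ge v_0^{4\beta}$ where it is positive gives $\int_0^1 v^{4\beta-1}K(v)\,dv\ge v_0^{4\beta}\int_0^1 v^{-1}K(v)\,dv$, so only $\beta=0$ needs checking. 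And at $\beta=0$ no ``tight estimate'' is required: all four digamma values are classical constants, the $\gamma$, $3\log 2$ and $\pm\pi/2$ contributions cancel, and one gets exactly $2G(0)=2\log 2-\tfrac{142}{105}$, which is positive because $\log 2>\tfrac{71}{105}$. (This closed form also explains why the margin you observed numerically is so small.) With those two steps written out, your argument is a complete and self-contained proof of the lemma; it is also genuinely informative compared with the paper, which offers none.
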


Now we need to introduce some functions (the notation used is the same as in \cite{Dyda}, considering $d=3$). For $n\in\mathbb{N}_0$ and $\alpha\in[0,2]$ we define:
\begin{align}
\mu_n(\alpha)&=\frac{2^\alpha\Gamma\left(\frac{\alpha}{2}+n+1\right)\Gamma\left(\frac{3+\alpha}{2}+n\right)}{n!\Gamma\left(\frac{3}{2}+n\right)}\label{mu},\\
\Lambda(\alpha)&=\frac{\mu_0(\alpha)\Gamma\left(\frac{\alpha}{2}+2\right)\Gamma\left(\frac{5}{2}+\alpha+2\right)(19\alpha+90)}{20\Gamma\left(\frac{5+\alpha}{2}+3\right)\Gamma\left(\alpha+2\right)}\label{Lambda},\\
a(\alpha)&=\frac{(14-3\alpha)(3+\alpha)}{1200(7+\alpha)}\nonumber,\\
b(\alpha)&=-\frac{1}{120}\frac{-\alpha^3+3\alpha^2+64\alpha+168}{7+\alpha}\nonumber,\\
T(\alpha)&=\frac{(90+19\alpha)\Gamma(\frac{\alpha}{2}+2)\Gamma(\alpha+\frac{9}{2})}{\Gamma(\frac{\alpha}{2}+\frac{11}{2})\Gamma(\alpha+2)}\label{T}.
\end{align}
Finally we define as usual the Psi-digamma function by
$$\Psi(x)=\frac{\Gamma'(x)}{\Gamma(x)},\quad x>0,$$
and the Psi-polygamma function by its derivatives $\Psi^{(n)}(x)$, $n\in\mathbb{N}$.

\section{Proof of Theorem \ref{mainresult}}\label{sec3}

According to the analysis done in \cite[Section 4]{Dyda} and subsequent sections, in order to prove Theorem \ref{mainresult}, it is sufficient to prove the next two conditions:

\begin{equation}\label{cond1}
\mu_2(\alpha)>\Lambda(\alpha),\quad\alpha\in(0,2],
\end{equation}
and
\begin{equation}\label{cond2}
g_\alpha(T(\alpha))<0,\quad\alpha\in(0,2],
\end{equation}
where $g_\alpha(t)=a(\alpha)t^2+b(\alpha)t+\alpha+2$.

We start with \eqref{cond1}, by inserting the corresponding definitions \eqref{mu} and \eqref{Lambda} to obtain
$$\frac{1}{120}\Gamma(\alpha+6)>\frac{4}{15}\frac{(90+19\alpha)\Gamma(\frac{\alpha}{2}+2)\Gamma(\alpha+\frac{9}{2})}{(\alpha+9)(\alpha+7)(\alpha+5)\Gamma(\frac{\alpha}{2}+\frac{3}{2})},$$
which is equivalent to
\begin{equation}\label{in10}
\frac{(\alpha+9)(\alpha+7)(\alpha+5)}{90+19\alpha}>32\frac{\Gamma(\frac{\alpha}{2}+2)\Gamma(\alpha+\frac{9}{2})}{\Gamma(\alpha+6)\Gamma(\frac{\alpha}{2}+\frac{3}{2})}.
\end{equation}
Now, in order to prove \eqref{in10}, note that
$$32\frac{\Gamma(\frac{\alpha}{2}+2)\Gamma(\alpha+\frac{9}{2})}{\Gamma(\alpha+6)\Gamma(\frac{\alpha}{2}+\frac{3}{2})}=4\frac{\alpha+7}{\alpha+4}\frac{\Gamma\left(\frac{\alpha}{2}+2\right)\Gamma\left(\alpha+\frac{9}{2}\right)}{
\Gamma(\alpha+3)\Gamma\left(\frac{\alpha}{2}+\frac{9}{2}\right)}\leq 2\frac{\alpha+7}{\alpha+4},$$
where we have used Lemma \ref{inc}. Therefore, once we show that
$$\frac{(\alpha+9)(\alpha+7)(\alpha+5)}{90+19\alpha}>2\frac{\alpha+7}{\alpha+4},\quad\alpha\in(0,2],$$
the veracity of \eqref{in10} will be confirmed. But it is elementary to verify the previous inequality as the left hand side is an increasing function, the right hand side is a decreasing function and both sides equal $\frac{7}{2}$ when $\alpha=0$. Condition \eqref{cond1} is, therefore, proved.

The next two propositions will prove the condition in \eqref{cond2}, which in turn conclude the proof of Theorem \ref{mainresult}.

\begin{proposition}\label{prop}
For all $\alpha\in[0,2]$ we have that
$$(90+19\alpha)\frac{2}{\alpha+9}\leq T(\alpha)\leq(90+19\alpha)\frac{\alpha+2}{\alpha+9}.$$
Moreover,
$$g_\alpha\left((90+19\alpha)\frac{2}{\alpha+9}\right)<0,\quad\alpha\in(0,2],$$
and
\begin{equation}
g_\alpha\left((90+19\alpha)\frac{\alpha+2}{\alpha+9}\right)<0,\quad\alpha\in(\alpha^\star,2],
\end{equation}
where
\begin{multline*}
\alpha^\star=\frac{1}{2679}
\sqrt[3]{118571508548+120555\sqrt{328018829721}}\\
+\frac{21023359}{2679\sqrt[3]{118571508548+120555\sqrt{328018829721}}}-\frac{8581}{2679}.
\end{multline*}
\end{proposition}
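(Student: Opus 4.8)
The plan is to prove the three displayed inequalities in Proposition~\ref{prop} essentially in the order they are stated, treating the two-sided bound on $T(\alpha)$ first and then the two sign conditions on $g_\alpha$ evaluated at the explicit endpoints.

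For the bound $(90+19\alpha)\frac{2}{\alpha+9}\le T(\alpha)\le (90+19\alpha)\frac{\alpha+2}{\alpha+9}$, I would start from the definition \eqref{T} and isolate the gamma-quotient
$$R(\alpha)=\frac{\Gamma\left(\frac{\alpha}{2}+2\right)\Gamma\left(\alpha+\frac{9}{2}\right)}{\Gamma\left(\frac{\alpha}{2}+\frac{11}{2}\right)\Gamma(\alpha+2)},$$
so that $T(\alpha)=(90+19\alpha)R(\alpha)$; the claimed inequalities then reduce to $\frac{2}{\alpha+9}\le R(\alpha)\le\frac{\alpha+2}{\alpha+9}$. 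The natural tool is Lemma~\ref{in0}: writing $R(\alpha)$ as a ratio $\frac{\Gamma(p)\Gamma(m)}{\Gamma(p-k)\Gamma(m+k)}$ with a suitable choice of $p,m,k$ (for the upper bound one wants the $\alpha+\frac92$ and $\frac{\alpha}{2}+\frac{11}{2}$ in the numerator/denominator to pair off, and for the lower bound the other pairing), the sign of $k(p-m-k)$ will control the direction, and the residual rational factor gives the $\frac{2}{\alpha+9}$ or $\frac{\alpha+2}{\alpha+9}$ after using the shift identity $\Gamma(z+1)=z\Gamma(z)$ a few times. Alternatively, since Lemma~\ref{inc} already gives monotonicity of a closely related quotient $F(\alpha)$ on $[0,2]$, one can instead express $R(\alpha)$ in terms of $F(\alpha)$ times an explicit rational function and conclude by evaluating at the endpoints $\alpha=0$ and $\alpha=2$; I expect one of these two routes to close the two-sided estimate cleanly.

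Once the enclosure of $T(\alpha)$ is in hand, note that $g_\alpha(t)=a(\alpha)t^2+b(\alpha)t+\alpha+2$ has $a(\alpha)>0$ on $[0,2)$ (since $14-3\alpha>0$), so $g_\alpha$ is an upward parabola in $t$; therefore $g_\alpha$ is negative on the whole interval between its two roots, and it suffices to check that both endpoints $(90+19\alpha)\frac{2}{\alpha+9}$ and $(90+19\alpha)\frac{\alpha+2}{\alpha+9}$ lie strictly between the roots, equivalently that $g_\alpha$ is strictly negative at each endpoint. Substituting $t=(90+19\alpha)\frac{2}{\alpha+9}$ into $g_\alpha$ and clearing the denominators $1200(7+\alpha)$ and $(\alpha+9)^2$ turns $g_\alpha\big((90+19\alpha)\frac{2}{\alpha+9}\big)<0$ into a polynomial inequality $P(\alpha)<0$ for $\alpha\in(0,2]$, which I would verify by exhibiting $P$ with an explicit factorization or sign analysis on $(0,2]$ (checking $P(0)\le 0$, $P(2)<0$, and absence of roots in between, e.g. via a Sturm-type argument or by writing $-P$ as a sum of terms manifestly positive on the interval). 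The same mechanism applied to $t=(90+19\alpha)\frac{\alpha+2}{\alpha+9}$ produces another polynomial $Q(\alpha)$, and here the condition $g_\alpha<0$ fails near $\alpha=0$ but holds for $\alpha>\alpha^\star$; the number $\alpha^\star$ is, by Cardano's formula, precisely the relevant real root of the cubic factor of $Q$, so the task is to identify $Q$, check it has exactly one root in $(0,2]$, confirm that this root is the stated $\alpha^\star$ (by verifying $\alpha^\star$ solves the cubic and lies in $(0,2]$), and confirm $Q<0$ on $(\alpha^\star,2]$.

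The main obstacle I anticipate is not conceptual but computational: correctly reducing each $g_\alpha$-evaluation to a single polynomial with transparent sign on $(0,2]$, and in particular matching the messy closed form of $\alpha^\star$ to a root of that polynomial. The degree is modest (the cubic under the radicals suggests $Q$ factors through a cubic times positive rational factors), so a symbolic computation—resultants, discriminants, or a Sturm sequence—should settle it; the delicate point is bookkeeping the sign of the cleared denominators ($7+\alpha>0$, $\alpha+9>0$, $1200>0$ all help) so that dividing through does not flip the inequality. I would organize the write-up so that the two-sided bound on $T(\alpha)$ is a self-contained lemma-style paragraph, and the two $g_\alpha$ sign checks are each reduced to one displayed polynomial inequality whose verification is stated to be a finite symbolic computation.
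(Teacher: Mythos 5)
Your plan follows the paper's proof essentially step for step: the lower bound on $T$ comes from Lemma \ref{in0} (with $p=\tfrac{\alpha}{2}+2$, $m=\alpha+\tfrac{9}{2}$, $k=-\tfrac{5}{2}$), the upper bound from Lemma \ref{inc} after pulling out the rational factor $\tfrac{2(\alpha+2)}{\alpha+9}$, and the two sign conditions reduce, after clearing the positive denominators, to the sign of $\alpha^2(95\alpha^3+237\alpha^2-6300\alpha-26568)$ and of $-\alpha^2(893\alpha^4+10367\alpha^3+36800\alpha^2+32472\alpha-13608)$ on the relevant intervals, with $\alpha^\star$ the unique root of the latter factor in $[0,2]$. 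One small correction to your hedged first step: only the Lemma \ref{inc} route closes the upper bound, since the natural application of Lemma \ref{in0} to that pairing (which forces $k=-\tfrac{\alpha}{2}$) gives $k(p-m-k)\geq 0$ and hence an inequality in the wrong direction, so your ``alternative'' is in fact the necessary one.
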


\begin{proof}
We start with the lower bound for $T$. By definition \eqref{T}, we have that
$$T(\alpha)=\frac{(90+19\alpha)\Gamma(\frac{\alpha}{2}+2)\Gamma(\alpha+\frac{9}{2})}{\Gamma(\frac{\alpha}{2}+\frac{11}{2})\Gamma(\alpha+2)}.$$
Now set $p=\frac{\alpha}{2}+2$, $k=-\frac{5}{2}$ and $m=\alpha+\frac{9}{2}$. Then, $p>k>-m$ and $k(p-m-k)\geq 0$ which, by Lemma \ref{in0}, imply that
$$\Gamma\left(\frac{\alpha}{2}+2\right)\Gamma\left(\alpha+\frac{9}{2}\right)\geq \Gamma\left(\frac{\alpha}{2}+\frac{9}{2}\right)\Gamma\left(\alpha+2\right),$$
which is equivalent to
$$\frac{\Gamma(\frac{\alpha}{2}+2)\Gamma(\alpha+\frac{9}{2})}{\Gamma(\frac{\alpha}{2}+\frac{11}{2})\Gamma(\alpha+2)}\geq\frac{2}{\alpha+9}.$$

Now, for the upper bound of $T$, we note that
$$T(\alpha)=\frac{(90+19\alpha)2(\alpha+2)\Gamma(\frac{\alpha}{2}+2)\Gamma(\alpha+\frac{9}{2})}{(\alpha+9)\Gamma(\frac{\alpha}{2}+\frac{9}{2})\Gamma(\alpha+3)},$$
and using Lemma \ref{inc} we immediately conclude that 
$$T(\alpha)\leq(90+19\alpha)\frac{\alpha+2}{\alpha+9}.$$

Let us now calculate
$$g_\alpha\left((90+19\alpha)\frac{2}{\alpha+9}\right)=\frac{1}{300}\alpha^2\frac{95\alpha^3+237\alpha^2-6300\alpha-26568}{(\alpha+7)(\alpha+9)^2},$$
which is negative on $\alpha\in(0,2]$ in virtue that $95\alpha^3+237\alpha^2-6300\alpha-26568<95\cdot 2^3+237\cdot 2^2-26568<0$.

Now,
$$g_\alpha\left((90+19\alpha)\frac{\alpha+2}{\alpha+9}\right)=-\frac{1}{1200}\alpha^2\frac{893\alpha^4+10367\alpha^3+36800\alpha^2+32472\alpha-13608}{(\alpha+7)(\alpha+9)^2}.$$
Since the derivative of $893\alpha^4+10367\alpha^3+36800\alpha^2+32472\alpha-13608$ is positive for all $\alpha>0$ and the polynomial has a zero on $[0,2]$ given by $\alpha^\star$, then
$$g_\alpha\left((90+19\alpha)\frac{\alpha+2}{\alpha+9}\right)<0,\quad\alpha\in(\alpha^\star,2],$$
and the proposition is proved.
\end{proof}
The previous result together with \eqref{cond1} show that Theorem \ref{mainresult} is proved on the interval $(\alpha^\star,2]$. The problem under consideration is harder to analyze when $\alpha$ is sufficiently small. Indeed, if one plot the graph of $T$ together with the (greatest) zero\footnote{It is easy to show that $b^2(\alpha)-4a(\alpha)(\alpha+2)\geq 0$} of $g$, i.e.
$$\frac{-b(\alpha)+\sqrt{b^2(\alpha)-4a(\alpha)(\alpha+2)}}{2a(\alpha)},$$
they are apparently indistinguishable when $\alpha$ is \emph{close} to zero. And, of course, we would like to prove that
\begin{equation}\label{in9}
T(\alpha)<\frac{-b(\alpha)+\sqrt{b^2(\alpha)-4a(\alpha)(\alpha+2)}}{2a(\alpha)},
\end{equation}
on $(0,\alpha^\star]$ which in turn would complete the proof of our main result, in virtue of Proposition \ref{prop}.

The only way we found out to remove the restriction on $\alpha$ and prove \eqref{cond2} was to consider not \eqref{in9} but the equivalent inequality:

\begin{equation}\label{in8}
\frac{\Gamma(\frac{\alpha}{2}+2)\Gamma(\alpha+\frac{9}{2})}{\Gamma(\frac{\alpha}{2}+\frac{11}{2})\Gamma(\alpha+2)}<\frac{-b(\alpha)+\sqrt{b^2(\alpha)-4a(\alpha)(\alpha+2)}}{2a(\alpha)(90+19\alpha)}.
\end{equation}
The following result completes the proof of Theorem \ref{mainresult}.

\begin{proposition}
Inequality \eqref{in8} holds for all $\alpha\in(0,\alpha^\star]$.
\end{proposition}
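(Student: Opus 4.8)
The plan is to prove \eqref{in8} on the small interval $(0,\alpha^\star]$ by reducing the Gamma-quotient on the left to something elementary via Lemma \ref{in0}, and then checking the resulting rational/radical inequality by elementary calculus. First I would apply Lemma \ref{in0} to bound
$$L(\alpha):=\frac{\Gamma(\frac{\alpha}{2}+2)\Gamma(\alpha+\frac{9}{2})}{\Gamma(\frac{\alpha}{2}+\frac{11}{2})\Gamma(\alpha+2)}$$
from above by a rational function of $\alpha$. Writing $\Gamma(\frac{\alpha}{2}+\frac{11}{2})=(\frac{\alpha}{2}+\frac{9}{2})(\frac{\alpha}{2}+\frac{7}{2})\Gamma(\frac{\alpha}{2}+\frac{7}{2})$ and $\Gamma(\alpha+\frac{9}{2})=(\alpha+\frac72)(\alpha+\frac52)\Gamma(\alpha+\frac52)$ (and similar shifts for the other two factors) turns $L(\alpha)$ into a ratio of explicit linear factors times $\Gamma(p)\Gamma(m)/\Gamma(p-k)\Gamma(m+k)$ for a suitable choice of the triple $(p,m,k)$ with $p>k>-m$ and $k(p-m-k)\le 0$, so that Lemma \ref{in0} gives $\Gamma(p)\Gamma(m)\le\Gamma(p-k)\Gamma(m+k)$ and hence an upper bound for $L(\alpha)$ by an explicit rational function $R(\alpha)$. (The point is that for small $\alpha$ the bound $L(\alpha)\le\frac{2}{\alpha+9}(\alpha+2)$ coming from Lemma \ref{inc} used in Proposition \ref{prop} is too crude, so I need a sharper rational majorant obtained by peeling off more integer/half-integer steps before invoking Lemma \ref{in0}.)

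Next I would reduce the right-hand side of \eqref{in8} to manageable form. Using the explicit formulas for $a(\alpha)$ and $b(\alpha)$, multiply through to clear $2a(\alpha)(90+19\alpha)$; since $a(\alpha)>0$ on $(0,2]$ and, by the footnote in the excerpt, the discriminant $b^2(\alpha)-4a(\alpha)(\alpha+2)\ge 0$, the inequality $R(\alpha)<\frac{-b(\alpha)+\sqrt{b^2(\alpha)-4a(\alpha)(\alpha+2)}}{2a(\alpha)(90+19\alpha)}$ is equivalent to
$$2a(\alpha)(90+19\alpha)R(\alpha)+b(\alpha)<\sqrt{b^2(\alpha)-4a(\alpha)(\alpha+2)}.$$
If the left-hand side is negative the inequality is automatic; where it is nonnegative, I would square both sides and obtain a polynomial inequality $P(\alpha)<0$ on $(0,\alpha^\star]$. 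This last step is purely symbolic (done in Maple, as the paper already announces), producing an explicit polynomial $P$.

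Finally I would verify $P(\alpha)<0$ on $(0,\alpha^\star]$ by the same elementary devices used throughout Section \ref{sec3}: factor out the obvious power of $\alpha$ (one expects $P(0)=0$, matching the observation that $T$ and the root of $g$ are ``indistinguishable'' near $0$, so $P$ vanishes to some order at $\alpha=0$), reduce to a polynomial of lower degree, and control its sign on the short interval $[0,\alpha^\star]$ by checking that its derivative has constant sign there (or by bounding it crudely as is done with $95\alpha^3+237\alpha^2-6300\alpha-26568$ in Proposition \ref{prop}), together with the value at the endpoints. Since $\alpha^\star<2$ is an explicit algebraic number, a uniform numeric bound $\alpha^\star<c$ for a convenient rational $c$ (say obtained from the sign of the quartic $893\alpha^4+10367\alpha^3+36800\alpha^2+32472\alpha-13608$ at $\alpha=c$) suffices to finish.

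The main obstacle is the first step: choosing the shifts and the triple $(p,m,k)$ so that Lemma \ref{in0} yields a rational majorant $R(\alpha)$ that is simultaneously (i) an upper bound for $L(\alpha)$ and (ii) still small enough that the resulting polynomial $P(\alpha)$ is genuinely negative on all of $(0,\alpha^\star]$ — the margin is delicate precisely because $T(\alpha)$ and the relevant root of $g_\alpha$ nearly coincide for small $\alpha$. If a single application of Lemma \ref{in0} is not tight enough, the fallback is to peel off one more half-integer step (so that the ``correction'' Gamma-ratio is closer to $1$) at the cost of a higher-degree $P$, whose sign on $[0,\alpha^\star]$ is then again a finite symbolic check.
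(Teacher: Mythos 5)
There is a genuine gap, and it sits exactly where you locate ``the main obstacle'': the rational majorant $R(\alpha)$ is never constructed, and a quantitative look at $\alpha=0$ shows that producing one from Lemma \ref{in0}/\ref{inc} is far harder than your plan suggests — quite possibly impossible in the form you describe. Since $f(0)=h(0)=\tfrac29$ (with $f=L$ and $h$ the right-hand side of \eqref{in8}), any rational majorant with $f\le R<h$ on $(0,\alpha^\star]$ must satisfy $R(0)=\tfrac29$ and $f'(0)\le R'(0)\le h'(0)$. The paper computes $f'(0)=\tfrac{671}{2835}-\tfrac29\ln 2\approx 0.08265$ and shows $h'(0)$ is barely above $0.083$ (one can check $h'(0)\approx 0.08306$, using that the discriminant equals $\alpha^2(\alpha^4-6\alpha^3+25\alpha^2+1104\alpha+2944)/(14400(7+\alpha)^2)$). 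So $R'(0)$ must land in a window of width roughly $4\times 10^{-4}$. The natural majorant produced by Lemma \ref{inc}, namely $R(\alpha)=\tfrac{\alpha+2}{\alpha+9}$, has $R'(0)=\tfrac{7}{81}\approx 0.08642$ and misses the window by almost ten times its width; indeed Proposition \ref{prop} shows this bound genuinely fails on $(0,\alpha^\star]$ — that is the whole reason $\alpha^\star$ appears. Worse, Lemma \ref{in0} applied to the residual Gamma-ratio after peeling tends to give the \emph{lower}-bound direction here (the numerator arguments $\tfrac{\alpha}{2}+2$, $\alpha+\tfrac92$ are more spread out than the matched denominator arguments, so $k(p-m-k)\ge 0$ for the natural pairings), and no amount of peeling removes the $\ln 2$ from $f'(0)/f(0)$, so you would need a peeling scheme whose rational logarithmic derivative at $0$ happens to fall within $0.002$ of $\tfrac{671}{630}-\ln 2$ from above \emph{and} for which the global inequalities $f\le R$ and $R<h$ hold on all of $[0,\alpha^\star]$. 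None of this is exhibited, and the subsequent squaring-and-polynomial-sign steps cannot even begin without it.

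The paper avoids this entirely by a tangent-line argument at $\alpha=0$ rather than a global rational majorization: it proves $f'(\alpha)\le f'(0)<h'(0)\le h'(\alpha)$ on $[0,\alpha^\star]$, which together with $f(0)=h(0)$ gives \eqref{in8} upon integration. The bound $f'(\alpha)\le f'(0)$ is obtained by expressing $f'$ through digamma functions, reducing via Lemma \ref{inc} to a comparison $r(\alpha)\le s(\alpha)$ with $r(0)=s(0)$, and then comparing $r'$ and $s'$ using monotonicity of the trigamma function; the bound $h'(0)\le h'(\alpha)$ is obtained by showing $h''>0$ through an explicit (computer-assisted) formula. In other words, the paper keeps the transcendental function $f$ intact and exploits concavity/convexity information at the point of tangency, which is precisely the extra precision your rational-majorant scheme lacks. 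If you want to salvage your approach, you would have to replace the single rational majorant by one that matches $f$ to first order at $0$ within $4\times 10^{-4}$, and justify it — that is the missing (and decisive) step.
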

\begin{proof}
We start defining two functions on $[0,\alpha^\star]$:
$$f(\alpha)=\frac{\Gamma(\frac{\alpha}{2}+2)\Gamma(\alpha+\frac{9}{2})}{\Gamma(\frac{\alpha}{2}+\frac{11}{2})\Gamma(\alpha+2)},$$
and
$$h(\alpha)=\frac{-b(\alpha)+\sqrt{b^2(\alpha)-4a(\alpha)(\alpha+2)}}{2a(\alpha)(90+19\alpha)}.$$
Note that $f(0)=\frac{2}{9}=h(0)$. We will show that
\begin{equation}\label{inprinc}
f'(\alpha)\leq f'(0)<h'(0)\leq h'(\alpha),\quad \alpha\in[0,\alpha^\star],
\end{equation}
which immediately implies the inequality in \eqref{in8}. 

We start to calculate the derivative of $f$:
$$f'(\alpha)=\frac{\Gamma(\frac{\alpha}{2}+2)\Gamma(\alpha+\frac{9}{2})}{\Gamma(\frac{\alpha}{2}+\frac{11}{2})\Gamma(\alpha+2)}\left[\frac{1}{2}\Psi\left(\frac{\alpha}{2}+2\right)+\Psi\left(\alpha+\frac{9}{2}\right)-\frac{1}{2}\Psi\left(\frac{\alpha}{2}+\frac{11}{2}\right)-\Psi\left(\alpha+2\right)\right].$$
From the above expression we see that $f'(0)=\frac{671}{2835}-\frac{2}{9}\ln(2)$. Showing that $f'(\alpha)\leq f'(0)$ is equivalent to showing that
\begin{equation}\label{in7}
\frac{1}{2}\Psi\left(\frac{\alpha}{2}+2\right)+\Psi\left(\alpha+\frac{9}{2}\right)-\frac{1}{2}\Psi\left(\frac{\alpha}{2}+\frac{11}{2}\right)-\Psi\left(\alpha+2\right)\leq f'(0)\frac{\Gamma(\frac{\alpha}{2}+\frac{11}{2})\Gamma(\alpha+2)}{\Gamma(\frac{\alpha}{2}+2)\Gamma(\alpha+\frac{9}{2})}.
\end{equation}
Since, by Lemma \ref{inc}, the following inequality holds
$$f'(0)\frac{\Gamma(\frac{\alpha}{2}+\frac{11}{2})\Gamma(\alpha+2)}{\Gamma(\frac{\alpha}{2}+2)\Gamma(\alpha+\frac{9}{2})}\geq f'(0)\frac{\alpha+9}{\alpha+2},$$
then, to prove \eqref{in7}, it is sufficient to show that
$$\underbrace{\frac{1}{2}\Psi\left(\frac{\alpha}{2}+2\right)+\Psi\left(\alpha+\frac{9}{2}\right)-\frac{1}{2}\Psi\left(\frac{\alpha}{2}+\frac{11}{2}\right)-\Psi\left(\alpha+2\right)}_{=r(\alpha)}\leq \underbrace{f'(0)\frac{\alpha+9}{\alpha+2}}_{=s(\alpha)}.$$
But since $r(0)=s(0)$, then it is sufficient to prove that
\begin{equation}\label{in6}
r'(\alpha)\leq s'(\alpha),\quad \alpha\in[0,\alpha^\star].
\end{equation}
We have that
$$r'(\alpha)=\frac{1}{4}\Psi'\left(\frac{\alpha}{2}+2\right)+\Psi'\left(\alpha+\frac{9}{2}\right)-\frac{1}{4}\Psi'\left(\frac{\alpha}{2}+\frac{11}{2}\right)-\Psi'\left(\alpha+2\right),$$
and
$$s'(\alpha)=\frac{1}{405}\frac{-671+630\ln(2)}{(\alpha+2)^2}.$$
It is well known that 
$$\Psi^{(n)}(x)=(-1)^{n+1}n!\sum_{k=0}^\infty\frac{1}{(x+k)^{n+1}},$$
from where we infer that $\Psi'(x)>0$ and $\Psi'(x)$ is decreasing for all $x>0$. Therefore,
$$r'(\alpha)\leq\frac{1}{4}\Psi'\left(2\right)+\Psi'\left(\frac{9}{2}\right)-\frac{1}{4}\Psi'\left(\frac{\alpha^\star}{2}+\frac{11}{2}\right)-\Psi'\left(\alpha^\star+2\right)<-0.1795.$$
On the other hand, $s'$ is increasing, hence $s'(\alpha)\geq s'(0)>-0.1447$, which completes the proof of \eqref{in6}.

Let us now return to \eqref{inprinc} and prove that $h'(0)\leq h'(\alpha)$ for all $\alpha\in[0,\alpha^\star]$.
It is easy to check that $\alpha^4-6\alpha^3+25\alpha^2+1104\alpha+2944>0$ on $[0,\alpha^\star]$ and fastidious to check that:
$$h''(\alpha)=-20\frac{A(\alpha)}{B(\alpha)},$$
where ($x(\alpha)=\sqrt{\alpha^4-6\alpha^3+25\alpha^2+1104\alpha+2944}/(\alpha+7)$)
\begin{align*}
A(\alpha)&=x(\alpha)(9861\alpha^{11}+21506\alpha^{10}+230639\alpha^{9}+22832627\alpha^{8}+367617434\alpha^{7}
\\
&+5168851898\alpha^6+42711607466\alpha^5+202807642502 \alpha^4+576435831104\alpha^3\\
&+1071472458168\alpha^2+1458665131392\alpha+1171994600448)-9861\alpha^{12}+234441\alpha^{11}\\
&+4246101\alpha^{10}+10907731\alpha^9+210630942\alpha^8+4000462638\alpha^7+33568792782\alpha^6\\
&+263084581722\alpha^5+1699521987312\alpha^4+6179874342344\alpha^3+9813287816640\alpha^2\\
&+1154602149120\alpha-8833393336320,
\end{align*}
and
\begin{equation*}
B(\alpha)=(90+19\alpha)^3(3+\alpha)^3(-14+3\alpha)^3\sqrt{(\alpha^4-6\alpha^3+25\alpha^2+1104\alpha+2944)^3}.
\end{equation*}
Evidently $B(\alpha)<0$ for all $\alpha$. Now, regarding the sign of the function $A$, we note that there are only two negative terms, $-9861\alpha^{12}$ and $-8833393336320$, that in turn are not enough to make $A$ negative. To see that, firstly we note that $-9861\alpha^{12}+234441\alpha^{11}>0$. Secondly, we note that, though rather tedious to prove it, it is nevertheless true the following inequality,
$$1171994600448 x(\alpha)-8833393336320>0,$$
which proves that $A(\alpha)>0$. In conclusion $h''(\alpha)>0$ for all $\alpha\in[0,\alpha^\star]$.

Finally, since $f'(0)<0.083<h'(0)$, the proof is done.
\end{proof}

\section{acknowledgements}
The author would like to thank Professor Pedro Antunes for fruitful discussions on the problem. 

The author is also grateful for the kind email-reply of Professors Dyda, Kuznetsov and Kwa$\acute{\mbox{s}}$nicki.

\bibliographystyle{amsplain}

\end{document}